\numberwithin{equation}{section}
\numberwithin{figure}{section}
\theoremstyle{plain}
\newtheorem{thm}{\protect\theoremname}[section]
  \theoremstyle{plain}
  \newtheorem{prop}[thm]{\protect\propositionname}
  \theoremstyle{plain}
  \newtheorem{lem}[thm]{\protect\lemmaname}
  \theoremstyle{plain}
  \newtheorem{cor}[thm]{\protect\corollaryname}
\def\R{\mathbb R}
\def\cal{\mathcal}
\def\al{\alpha}
\def\be{\beta}
\def\ga{\gamma}
\def\de{\delta}
\def\ep{\epsilon}
\def\la{\lambda}
\def\si{\sigma}
\def\th{\theta}
\def\var{\varphi}
\def\na{\nabla}
\def\Om{\Omega}
\def\De{\Delta}
\def\pa{\partial}
\def\divergence{{\rm div}\,}
\newcommand\esssup{{\rm \,esssup\,}}
\numberwithin{equation}{section}
\theoremstyle{definition}
  \providecommand{\corollaryname}{Corollary}
  \providecommand{\lemmaname}{Lemma}
  \providecommand{\propositionname}{Proposition}
\providecommand{\theoremname}{Theorem}
\begin{document}

\title{\title[Gradient Estimates for Quasilinear Equations]GGradient Estimates
for Solutions To Quasilinear Elliptic Equations with Critical Sobolev
Growth and Hardy Potential}

\author{Chang-Lin Xiang}

\date{\today}

\address{Department of Mathematics and Statistics, P.O. Box 35 (MaD) FI-40014
University of Jyv\"askyl\"a, Finland}

\email{changlin.c.xiang@jyu.fi}
\begin{abstract}
This note is a continuation of the work \cite{CaoXiangYan2014}. We
study the following quasilinear elliptic equations
\[
-\Delta_{p}u-\frac{\mu}{|x|^{p}}|u|^{p-2}u=Q(x)|u|^{\frac{Np}{N-p}-2}u,\quad\, x\in\mathbb{R}^{N},
\]
where $1<p<N,0\leq\mu<\left((N-p)/p\right)^{p}$ and $Q\in L^{\infty}(\R^{N})$.
Optimal asymptotic estimates on the gradient of solutions are obtained
both at the origin and at the infinity.
\end{abstract}

\maketitle
{\small
\noindent {\bf Keywords:} Quasilinear elliptic equations; Hardy's inequality;  Gradient estimate
\smallskip
\newline\noindent {\bf 2010 Mathematics Subject Classification: } 35J60 35B33

\tableofcontents{}

\section{Introduction and main result}

Let $1<p<N,0\leq\mu<\bar{\mu}=\left((N-p)/p\right)^{p}$ and $p^{*}=Np/(N-p)$.
In this note, we study the following quasilinear elliptic equations
\begin{equation}
-\Delta_{p}u-\frac{\mu}{|x|^{p}}|u|^{p-2}u=Q(x)|u|^{p^{*}-2}u,\quad\, x\in\mathbb{R}^{N},\label{eq: Objects}
\end{equation}
where
\begin{eqnarray*}
\Delta_{p}u=\sum_{i=1}^{N}\partial_{x_{i}}(|\nabla u|^{p-2}\partial_{x_{i}}u), &  & \nabla u=(\partial_{x_{1}}u,\cdots,\partial_{x_{N}}u),
\end{eqnarray*}
is the $p$-Laplacian operator and $Q\in L^{\infty}(\mathbb{R}^{N})$.

Let $C_{0}^{\infty}(\R^{N})$ be the space of smooth functions in
$\R^{N}$ with compact support and ${\cal D}^{1,p}(\R^{N})$ the closure
of $C_{0}^{\infty}(\R^{N})$ in the seminorm $||v||_{{\cal D}^{1,p}(\R^{N})}=||\na v||_{L^{p}(\R^{N})}$.
A function $u\in{\cal D}^{1,p}(\R^{N})$ is a weak solution to equation
(\ref{eq: Objects}) if
\begin{eqnarray*}
\int_{\mathbb{R}^{N}}\left(|\nabla u|^{p-2}\nabla u\cdot\nabla\varphi-\frac{\mu}{|x|^{p}}|u|^{p-2}u\varphi\right)=\int_{\mathbb{R}^{N}}Q(x)|u|^{p^{*}-2}u\varphi &  & \forall\:\var\in C_{0}^{\infty}(\R^{N}).
\end{eqnarray*}

In \cite{CaoXiangYan2014}, the author obtained the following result
on the asymptotic behaviors of solutions to equation (\ref{eq: Objects})
both at the origin and at the infinity.
\begin{thm}
\label{thm: Cao-Xiang-Yan} Let $Q\in L^{\infty}(\mathbb{R}^{N})$
and $u\in\mathcal{D}^{1,p}(\mathbb{R}^{N})$ be a weak solution to
equation (\ref{eq: Objects}). Then there exists a positive constant
$C$ depending on $N,p,\mu,||Q||_{\infty}$ and the solution $u$
such that
\begin{eqnarray}
|u(x)|\le C|x|^{-\gamma_{1}} &  & for\;|x|<R_{0},\label{eq: origin growth}
\end{eqnarray}
 and that
\begin{eqnarray*}
|u(x)|\leq C|x|^{-\gamma_{2}} &  & for\;|x|>R_{1},
\end{eqnarray*}
where $0<R_{0}<1<R_{1}$ are constants depending on $N,p,\mu,||Q||_{\infty}$
and the solution $u$.
\end{thm}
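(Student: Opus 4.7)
\smallskip
\noindent\textbf{Overall strategy.} The plan is to combine a Moser-type iteration on dyadic annular shells with a rescaling that exploits the common $p$-homogeneity of $-\Delta_{p}$ and the Hardy potential, and then to pin down the sharp decay rate via a barrier/comparison argument. Let $\gamma_{1}<\gamma_{2}$ be the two positive roots of the characteristic equation $\gamma^{p-1}(N-p-(p-1)\gamma)=\mu$; since $0\le\mu<\bar\mu$, they are real and separated by $(N-p)/p$. Inserting $w(x)=|x|^{-\gamma_{i}}$ into the homogeneous equation $-\Delta_{p}w=\mu|x|^{-p}|w|^{p-2}w$ shows that these are the only admissible radial exponents, and among them $\gamma_{1}<(N-p)/p$ is the one that is locally admissible in $\cal{D}^{1,p}$ near the origin, while $\gamma_{2}>(N-p)/p$ is the one admissible at infinity.

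\smallskip
\noindent\textbf{Scaling and Moser iteration.} For $r>0$ and $\gamma\in\{\gamma_{1},\gamma_{2}\}$, set $v(y):=r^{\gamma}u(ry)$ on the annulus $\{1/2<|y|<4\}$. A direct computation shows that $v$ satisfies
\begin{equation*}
-\Delta_{p}v-\frac{\mu}{|y|^{p}}|v|^{p-2}v=r^{\,p-(p^{*}-p)\gamma}\,Q(ry)\,|v|^{p^{*}-2}v,\qquad \|v\|_{L^{p^{*}}(1/2<|y|<4)}\le\|u\|_{L^{p^{*}}(\R^{N})}.
\end{equation*}
The scalar factor on the right has exponent $p-(p^{*}-p)\gamma=p\bigl(1-p\gamma/(N-p)\bigr)$, which is positive for $\gamma=\gamma_{1}$ and negative for $\gamma=\gamma_{2}$; hence it tends to $0$ as $r\to 0^{+}$ in the first case and as $r\to+\infty$ in the second. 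I would test the rescaled equation against $v|v|^{q-2}\eta^{p}$ for $q\ge p$ and a cutoff $\eta\in C_{0}^{\infty}(1/2<|y|<4)$, absorb the singular Hardy potential by the sharp Hardy inequality (legitimate because $\mu<\bar\mu$) to leave a strictly positive multiple of $\int|\nabla(|v|^{(q+p-2)/p})|^{p}\eta^{p}$ on the left, and apply Sobolev embedding on the shell. The critical right-hand side contributes $r^{\,p-(p^{*}-p)\gamma}\int Q(ry)|v|^{p^{*}+q-p}\eta^{p}$, which is exactly at the Sobolev borderline; the smallness of the scalar prefactor is what absorbs it into the gradient term, and the iteration closes with constants uniform in $r$, yielding
\begin{equation*}
\|v\|_{L^{\infty}(1\le|y|\le 2)}\le C\|v\|_{L^{p^{*}}(1/2<|y|<4)}\le C\|u\|_{L^{p^{*}}(\R^{N})}.
\end{equation*}
Translating back gives $\sup_{r\le|x|\le 2r}|u(x)|\le Cr^{-\gamma_{i}}$ on the corresponding range of $r$, whence Theorem~\ref{thm: Cao-Xiang-Yan} follows by dyadic covering of $\{|x|<R_{0}\}\setminus\{0\}$ and $\{|x|>R_{1}\}$.

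\smallskip
\noindent\textbf{Main obstacle.} The real difficulty is making Moser iteration close in spite of the critical Sobolev growth: the obvious bootstrap produces an uncontrolled term $\int|v|^{p^{*}+q-p}\eta^{p}$ on the right, and only the decaying prefactor $r^{\,p-(p^{*}-p)\gamma}\to 0$ (equivalently, the shell-smallness $\|u\|_{L^{p^{*}}(r/2<|x|<4r)}\to0$ inherited from $u\in\cal{D}^{1,p}(\R^{N})$) restores the smallness needed to absorb it, while tracking all constants independently of the scale $r$. A second subtle point is singling out the sharp exponents $\gamma_{1},\gamma_{2}$ rather than any $\gamma\in\bigl(0,(N-p)/p\bigr)$ for which the prefactor is small: this is forced by a Hardy-type comparison argument against the explicit radial solutions $|x|^{-\gamma_{i}}$, invoking the weak comparison principle for $-\Delta_{p}-\mu|\cdot|^{-p}|\cdot|^{p-2}$ on the punctured domain, now legitimate because $L^{\infty}_{\mathrm{loc}}(\R^{N}\setminus\{0\})$ regularity of $u$ has been secured in the previous step.
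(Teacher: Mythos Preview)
The paper does not contain a proof of Theorem~\ref{thm: Cao-Xiang-Yan}; it is quoted from the companion work \cite{CaoXiangYan2014} and used as a black box for the gradient estimates that are this note's actual contribution. So there is no proof here to compare against, and I can only assess your proposal on its own terms.

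There is a genuine gap. Your displayed claim $\|v\|_{L^{p^{*}}(1/2<|y|<4)}\le\|u\|_{L^{p^{*}}(\R^{N})}$ is false under the scaling $v(y)=r^{\gamma}u(ry)$ with $\gamma\in\{\gamma_{1},\gamma_{2}\}$. A change of variables gives
\[
\|v\|_{L^{p^{*}}(1/2<|y|<4)}=r^{\,\gamma-\frac{N-p}{p}}\,\|u\|_{L^{p^{*}}(r/2<|x|<4r)},
\]
and since $\gamma_{1}<(N-p)/p<\gamma_{2}$, the factor $r^{\gamma-(N-p)/p}$ blows up precisely in the regime you need ($r\to0$ for $\gamma_{1}$, $r\to\infty$ for $\gamma_{2}$). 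The only exponent for which the $L^{p^{*}}$ norm of $v$ is controlled by that of $u$ is the scale-invariant one $\gamma=(N-p)/p$, and for that choice your ``small'' coefficient $r^{p-(p^{*}-p)\gamma}$ equals $1$. In fact the two effects cancel identically: the quantity that governs absorption of the critical term in the Moser step is
\[
r^{\,p-(p^{*}-p)\gamma}\,\|v\|_{L^{p^{*}}}^{p^{*}-p}=\|u\|_{L^{p^{*}}(r/2<|x|<4r)}^{p^{*}-p},
\]
which is independent of $\gamma$. Hence the annular Moser iteration, however you scale it, yields only the non-sharp estimate $|u(x)|\le C|x|^{-(N-p)/p}$; it cannot by itself produce $|x|^{-\gamma_{1}}$ or $|x|^{-\gamma_{2}}$.

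Your final paragraph does flag that a separate comparison against the radial barriers $|x|^{-\gamma_{i}}$ is needed to pin down the sharp exponents, so the overall two-stage strategy (Moser for a preliminary $L^{\infty}_{\mathrm{loc}}$ bound, then a barrier step) is reasonable in outline. But the middle section conflates the two stages and draws the sharp conclusion directly from Moser, which it cannot deliver. The correct route is to run Moser at the invariant exponent $(N-p)/p$, using the shell-smallness $\|u\|_{L^{p^{*}}(r/2<|x|<4r)}\to0$ (this is exactly the dependence on $u$ described after Theorem~\ref{thm: main result}), to obtain $|u(x)|\le C|x|^{-(N-p)/p}$; only then can one invoke a comparison argument. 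For the quasilinear, non-variational operator $v\mapsto -\Delta_{p}v-\mu|x|^{-p}|v|^{p-2}v$ that last step is not a routine application of the weak comparison principle and is where most of the effort in \cite{CaoXiangYan2014} lies.
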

In the above theorem and in the following, the exponents $\ensuremath{\gamma_{1}}$
and $\ga_{2}$ are defined as follows: consider the equation
\begin{eqnarray*}
(p-1)\ga^{p}-(N-p)\ga^{p-1}+\mu=0, &  & \ga\ge0.
\end{eqnarray*}
Due to our assumptions on $N,p$ and $\mu$, that is, $1<p<N$ and
$0\le\mu<\bar{\mu},$ above equation has two nonnegative solutions
$\ga_{1}$ and $\ga_{2}$ and they satisfy
\[
0\le\ga_{1}<\frac{N-p}{p}<\ga_{2}\le\frac{N-p}{p-1}.
\]

Note that the constants $C,R_{0},R_{1}$ depend on the solution $u$.
This dependence has been discussed in \cite{CaoXiangYan2014} in full
details. Later in this note we will give a brief discussion on this
dependence after giving our main result.

Asymptotic estimates for solutions to equation (\ref{eq: Objects})
and to its variants are  useful. For applications of such estimates,
we refer to e.g. \cite{Cab,Cao,Cao2,Cao4,Cao3,F}. In the present
note, we continue the work of \cite{CaoXiangYan2014} and study asymptotic
behaviors of gradient of weak solutions to equation (\ref{eq: Objects}).
Not much is known in this aspect.

To the best of our knowledge, all known results on the asymptotic
behaviors of gradient of weak solutions to equation (\ref{eq: Objects})
are concerned with the special case in which $Q\equiv1$. Let us discuss
the known results according to the value of the parameter $\mu$.

In the case when $\mu=0$, a prototype of equation (\ref{eq: Objects})
when $Q\equiv$1 is
\begin{eqnarray}
-\De_{p}u=|u|^{p^{*}-2}u, &  & \text{in }\R^{N}.\label{eq: mu is 0}
\end{eqnarray}
When $p=2$, Gidas, Ni and Nirenberg \cite{GNN}   proved that positive
$C^{2}$ solutions of equation (\ref{eq: mu is 0}) (not necessarily
in ${\cal D}^{1,2}(\R^{N})$) satisfying
\begin{equation}
\liminf_{|x|\to\infty}\left(|x|^{N-2}u(x)\right)<\infty\label{eq: GNN condition}
\end{equation}
 must be of the form $u(x)=u_{0}^{\la,x_{0}}(x)=\lambda^{\frac{N-2}{2}}u_{0}(\lambda(x-x_{0}))$
for some $\lambda>0$ and some $x_{0}\in\mathbb{R}^{N}$,  where
\[
u_{0}(x)=(N(N-2))^{\frac{N-2}{4}}\left({1+|x|^{2}}\right)^{-\frac{N-2}{2}}.
\]
Hypothesis (\ref{eq: GNN condition}) was removed by Caffarelli, Gidas
and Spruck in \cite{Caffarelli1989}.  Thus for positive $C^{2}$
solutions $u$ of equation (\ref{eq: mu is 0}) when $p=2$, there
exists $\la>0$ and $x_{0}\in\R^{N}$ such that $u=u_{0}^{\la,x_{0}}$.
Hence we have that
\[
\lim_{|x|\to0}|\na u_{0}^{\la,x_{0}}(x)||x|=0,
\]
and that
\[
\lim_{|x|\to\infty}|\na u_{0}^{\la,x_{0}}(x)||x|^{N-1}=C\la^{-\frac{N-2}{2}},
\]
for some constant $C=C(N)>0$.

In the general case when $p\in(1,N)$, we can follow the argument
of \cite[Theorem 3.13]{B} to find that weak positive radial solutions
in ${\cal D}^{1,p}(\R^{N})$ to equation (\ref{eq: mu is 0}) are
of the form $u(x)=u_{0}^{\la,x_{0}}(x)=\lambda^{\frac{N-p}{p}}u_{0}(\lambda(x-x_{0}))$
for some $\lambda>0$ and some $x_{0}\in\mathbb{R}^{N}$,  where $u_{0}\in{\cal D}^{1,p}(\R^{N})$
is a particular weak positive radial solution satisfying
\begin{eqnarray*}
\lim_{|x|\rightarrow0}|\nabla u_{0}(x)||x|=0 & \text{and} & \lim_{|x|\rightarrow\infty}|\nabla u_{0}(x)||x|^{\frac{N-1}{p-1}}=C
\end{eqnarray*}
for some positive constant $C=C(N,p)>0$. Thus for weak positive radial
solution $u=u_{0}^{\la,x_{0}}\in{\cal D}^{1,p}(\R^{N})$, we have
that
\[
\lim_{|x|\rightarrow0}|\nabla u_{0}^{\la,x_{0}}(x)||x|=0,
\]
and that
\[
\lim_{|x|\to\infty}|\na u_{0}^{\la,x_{0}}(x)||x|^{\frac{N-1}{p-1}}=C\la^{-\frac{N-p}{p}},
\]
for some positive constant $C=C(N,p)>0$.

In the case when $\mu\in(0,\bar{\mu})$, a prototype of equation (\ref{eq: Objects})
when $Q\equiv$1 is
\begin{eqnarray}
-\Delta_{p}u-\frac{\mu}{|x|^{p}}|u|^{p-2}u=|u|^{p^{*}-2}u, &  & \text{in }\R^{N}.\label{eq: object of p not equal to 2}
\end{eqnarray}
When $p=2$, by Chou and Chu \cite[Theorem B]{ChouKS1993}, every
positive solution $u\in C^{2}(\R^{N}\backslash\{0\})$ must be radially
symmetric with respect to the origin, provided that $u$ satisfies
\begin{equation}
|x|^{\sqrt{\overline{\mu}}-\sqrt{\overline{\mu}-\mu}}u(x)\in L_{loc}^{\infty}(\R^{N}).\label{eq: CC condition}
\end{equation}
Catrina and Wang \cite{C} and Terracini \cite{Terracini} proved
that every positive radial solution of equation (\ref{eq: object of p not equal to 2})
must be of the form $u(x)=u_{0}^{\lambda}(x)=\lambda^{\frac{N-2}{2}}u_{0}(\lambda x)$
for some $\ensuremath{\lambda>0}$ , where $\ensuremath{u_{0}}$ is
given by
\[
u_{0}(x)=\left({4N(\bar{\mu}-\mu)}/{(N-2)}\right)^{\frac{N-2}{4}}\left(|x|^{\frac{\sqrt{\overline{\mu}}-\sqrt{\overline{\mu}-\mu}}{\sqrt{\bar{\mu}}}}+|x|^{\frac{\sqrt{\overline{\mu}}+\sqrt{\overline{\mu}-\mu}}{\sqrt{\bar{\mu}}}}\right)^{-\frac{N-2}{2}}.
\]
Thus for positive solution $u$ in $C^{2}(\R^{N}\backslash\{0\})$
satisfying (\ref{eq: CC condition}), there is a constant $\la>0$
such that $u(x)=u_{0}^{\lambda}(x)=\lambda^{\frac{N-2}{2}}u_{0}(\lambda x)$.
From which, we have that
\[
\lim_{|x|\to0}|\na u_{0}^{\la}(x)||x|^{\sqrt{\bar{\mu}}-\sqrt{\bar{\mu}-\mu}+1}=C_{1}\la^{\sqrt{\bar{\mu}-\mu}},
\]
 and that
\[
\lim_{|x|\to\infty}|\na u_{0}^{\la}(x)||x|^{\sqrt{\bar{\mu}}+\sqrt{\bar{\mu}-\mu}+1}=C_{2}\la^{-\sqrt{\bar{\mu}-\mu}},
\]
for some constants $C_{1},C_{2}>0$ depending only on $N$ and $\mu$.
We remark that, by (\ref{eq: origin growth}) of Theorem \ref{thm: Cao-Xiang-Yan},
every weak solution $u\in{\cal D}^{1,2}(\R^{N})$ of equation (\ref{eq: object of p not equal to 2})
satisfies hypothesis (\ref{eq: CC condition}).

In the general case when $p\in(1,N)$, Boumediene, Veronica and Peral
\cite[Theorem 3.13]{B} proved that   all  weak positive radial solutions
in ${\cal D}^{1,p}(\R^{N})$ of equation (\ref{eq: object of p not equal to 2})
are of the form $\ensuremath{u(x)=u_{0}^{\lambda}(x)=\lambda^{\frac{N-p}{p}}u_{0}(\lambda x)}$
for some $\ensuremath{\lambda>0}$, where $\ensuremath{u_{0}}$ is
a particular weak positive radial solution in ${\cal D}^{1,p}(\R^{N})$
satisfying     \begin{eqnarray} \lim_{|x|\rightarrow0}|\nabla u_0(x)||x|^{\gamma_{1}+1}=C_{1}  &\text{and}&\lim_{|x|\rightarrow \infty}|\nabla u_0(x)||x|^{\gamma_{2}+1}=C_{2},\label{est: optimal result} \end{eqnarray} for
some  constants $C_{1},C_{2}>0$. Thus for any weak positive radial
solution $u=u_{0}^{\la}$ of equation (\ref{eq: object of p not equal to 2}),
we have that
\begin{equation}
\lim_{|x|\rightarrow0}|\nabla u_{0}^{\la}(x)||x|^{\gamma_{1}+1}=C_{1}\la^{\frac{N-p}{p}-\ga_{1}},\label{est: optimal result 2}
\end{equation}
and that
\begin{equation}
\lim_{|x|\rightarrow\infty}|\nabla u_{0}^{\la}(x)||x|^{\gamma_{2}+1}=C_{2}\la^{\frac{N-p}{p}-\ga_{2}},\label{est: optimal result 3}
\end{equation}
with the constants $C_{1},C_{2}>0$ given by \eqref{est: optimal result}.

In this note, we give the asymptotic estimates for the gradient of
weak solutions to equation (\ref{eq: Objects}) both at the origin
and at the infinity.
\begin{thm}
\label{thm: main result}Let $Q\in L^{\infty}(\mathbb{R}^{N})$ and
$u\in\mathcal{D}^{1,p}(\mathbb{R}^{N})$ be a weak solution of equation
(\ref{eq: Objects}). Then there exists a positive constant $C$ depending
on $N,p,\mu,||Q||_{\infty}$ and $u$, such that
\begin{equation}
|\na u(x)|\leq{C}{|x|^{-\gamma_{1}-1}}\quad for\;|x|<R_{0},\label{eq: origin growth 1}
\end{equation}
 and that
\begin{equation}
|\na u(x)|\leq{C}{|x|^{-\gamma_{2}-1}}\quad for\;|x|>R_{1},\label{eq: infinity decay-1}
\end{equation}
where $0<R_{0}<1<R_{1}$ depend on $N,p,\mu,||Q||_{\infty}$ and $u$.
\end{thm}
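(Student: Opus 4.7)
The plan is to derive the gradient bounds from the pointwise bounds of Theorem~\ref{thm: Cao-Xiang-Yan} by a scaling argument, combined with interior $C^{1,\al}$-regularity for $p$-Laplacian type equations with bounded lower order terms. Since \eqref{eq: origin growth 1} and \eqref{eq: infinity decay-1} are of the same nature, I describe the proof near the origin in detail; the behavior at infinity is entirely analogous upon replacing $\ga_{1}$ by $\ga_{2}$.

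First, fix an arbitrary $x_{0}\in\R^{N}\setminus\{0\}$ with $r:=|x_{0}|<R_{0}/4$, and rescale by setting $v(y):=r^{\ga_{1}}u(x_{0}+ry)$ for $y\in B_{1/2}(0)$. For such $y$ one has $|x_{0}+ry|\in[r/2,3r/2]\subset(0,R_{0})$, so Theorem~\ref{thm: Cao-Xiang-Yan} gives
\[
|v(y)|=r^{\ga_{1}}|u(x_{0}+ry)|\le Cr^{\ga_{1}}|x_{0}+ry|^{-\ga_{1}}\le C,
\]
with $C$ independent of $r$. A direct calculation, using the $(p-1)$-homogeneity of $\De_{p}$ and the algebraic structure of the nonlinearities, shows that $v$ is a weak solution on $B_{1/2}(0)$ of
\[
-\De_{p}v-\frac{\mu r^{p}}{|x_{0}+ry|^{p}}|v|^{p-2}v = r^{p-\ga_{1}(p^{*}-p)}Q(x_{0}+ry)|v|^{p^{*}-2}v.
\]
Two features make this rescaled equation tractable as $r\to 0$: the Hardy coefficient $\mu r^{p}/|x_{0}+ry|^{p}$ is bounded uniformly by $2^{p}\mu$ on $B_{1/2}(0)$; and since $\ga_{1}<(N-p)/p$, a short computation yields $\ga_{1}(p^{*}-p)<p$, so that $r^{p-\ga_{1}(p^{*}-p)}$ remains bounded (and in fact tends to $0$) as $r\to 0$. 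Combined with the uniform $L^{\infty}$-bound on $v$, both the lower order term and the right-hand side are controlled in $L^{\infty}(B_{1/2}(0))$ by constants independent of $r$.

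Next, I would invoke the classical interior $C^{1,\al}$-regularity theory for quasilinear equations of $p$-Laplacian type with bounded inhomogeneity (DiBenedetto--Tolksdorf--Lieberman). After absorbing the Hardy term into the right-hand side, the rescaled equation reads $-\De_{p}v=f$ with $\|f\|_{L^{\infty}(B_{1/2})}$ and $\|v\|_{L^{\infty}(B_{1/2})}$ controlled uniformly in $r$. This yields a constant $C'=C'(N,p,\mu,\|Q\|_{\infty})$ such that $|\na v(0)|\le C'$. Undoing the scaling, $|\na u(x_{0})|=r^{-\ga_{1}-1}|\na v(0)|\le C'|x_{0}|^{-\ga_{1}-1}$, which is exactly \eqref{eq: origin growth 1}. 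The infinity estimate follows the same scheme with $v(y):=r^{\ga_{2}}u(x_{0}+ry)$ for $|x_{0}|=r>4R_{1}$; the key sign is now that $\ga_{2}>(N-p)/p$ forces $p-\ga_{2}(p^{*}-p)<0$, so that the critical coefficient $r^{p-\ga_{2}(p^{*}-p)}$ instead vanishes as $r\to\infty$, while the Hardy coefficient is again bounded.

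The main obstacle I anticipate is bookkeeping of the constants in the regularity estimate: one must rely on a formulation of the gradient regularity for $-\De_{p}v=f$ whose constant depends only on $\|v\|_{L^{\infty}}$ and $\|f\|_{L^{\infty}}$, and not on finer data of the equation that could degenerate in the rescaling. This is exactly what is provided by the standard DiBenedetto--Tolksdorf--Lieberman estimates, so the scheme should go through and produce the sharp-looking gradient decay with exponents $\ga_{1}+1$ and $\ga_{2}+1$ matching the powers $\ga_{1}$ and $\ga_{2}$ appearing in Theorem~\ref{thm: Cao-Xiang-Yan}.
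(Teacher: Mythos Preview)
Your proposal is correct and is, in substance, the same argument the paper gives: both reduce the problem to an interior gradient bound for $-\De_{p}v=f$ with $f\in L^{\infty}$ on a ball whose radius is comparable to $|x|$, using the pointwise bounds of Theorem~\ref{thm: Cao-Xiang-Yan} to control the right-hand side. The only cosmetic difference is that the paper works directly on $B_{|x|/4}(x)$ and splits the argument into a Caccioppoli estimate (Lemma~\ref{lem: Cacciopolli estimate}) that bounds $\fint_{B_{|x|/4}(x)}|\na u|^{p}$, followed by the DiBenedetto-type estimate (Proposition~\ref{lem:  apriori  gradient estimate}) in the form $\sup|\na u|\le C(\fint|\na u|^{p})^{1/p}+CR^{1/(p-1)}\|f\|_{\infty}^{1/(p-1)}$; your rescaling $v(y)=r^{\ga_{i}}u(x_{0}+ry)$ simply absorbs both steps into a single invocation of the $C^{1}$ regularity on the unit ball with uniform $L^{\infty}$ data. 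Your observation that $\ga_{1}(p^{*}-p)<p<\ga_{2}(p^{*}-p)$ is exactly what drives the paper's estimate $|f(x)|\le C|x|^{-p-(p-1)\ga_{i}}$, so the two presentations are interchangeable.
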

Again, in the above theorem the positive constants $C,R_{0},R_{1}$
depend on the solution $u$. Indeed, this is the case, since equation
\eqref{eq: Objects} when $Q\equiv1$  is invariant under the scaling
$v(x)=\lambda^{\frac{N-p}{p}}u(\lambda x)$, $\lambda>0$. In above
theorems and in the following, if we say a constant depends on the
solution $u$, it means that the constant depends on $||u||_{p^{*},\R^{N}}$,
the $L^{p^{*}}$-norm of $u$, and also on the modulus of continuity
of the function $h(r)=||u||_{p^{*},B_{r}(0)}+||u||_{p^{*},\R^{N}\backslash B_{1/r}(0)}$
at $r=0$. Precisely, we can choose a constant $\ep>0$ depending
on $N,p,\mu$ and $||Q||_{\infty}$. Since $h(r)\to0$ as $r\to0$,
there exists $r_{0}>0$ such that
\[
||u||_{p^{*},B_{r_{0}}(0)}+||u||_{p^{*},\R^{N}\backslash B_{1/r_{0}}(0)}<\ep.
\]
Then the constants $C,R_{0},R_{1}$ in Theorem \ref{thm: Cao-Xiang-Yan}
and Theorem \ref{thm: main result} depend also on $r_{0}$. The reader
is referred to find more details on this dependence in \cite{CaoXiangYan2014}.

Estimates (\ref{est: optimal result 2}) and (\ref{est: optimal result 3})
imply that the exponents $\ga_{1}+1$ and $\ga_{2}+1$ in the estimates
(\ref{eq: origin growth 1}) and (\ref{eq: infinity decay-1}) respectively
are optimal.

The idea to prove Theorem \ref{thm: main result} is as follows. Let
$u$ be a weak solution to equation (\ref{eq: Objects}) and set
\begin{eqnarray*}
f(x)=\mu|x|^{-p}|u|^{p-2}u+Q(x)|u|^{p^{*}-2}u, &  & x\in\R^{N}.
\end{eqnarray*}
Then $u$ is a weak solution to equation
\begin{eqnarray}
-\De_{p}u=f &  & \text{in }\R^{N}\backslash\{0\}.\label{eq: p-Laplacian equation 1}
\end{eqnarray}
For any ball $B_{|x|/2}(x)$ centered at $x$ with radius $|x|/2$,
$x\ne0$, gradient estimate of the $p$-Laplacian equation (\ref{eq: p-Laplacian equation 1})
gives us
\begin{equation}
\sup_{B_{|x|/8}(x)}|\na u|\le C\left(\fint_{B_{|x|/4}(x)}|\na u|^{p}\right)^{\frac{1}{p}}+C|x|^{\frac{1}{p-1}}||f||_{\infty,B_{|x|/4}(x)}^{\frac{1}{p-1}}.\label{eq: apriori gradient estimate 1}
\end{equation}
For the terms on the right hand side of (\ref{eq: apriori gradient estimate 1}),
Theorem \ref{thm: Cao-Xiang-Yan} gives estimates on the second term
at the origin and at the infinity. The estimate on the first term
follows from Caccioppoli inequality, see Lemma \ref{lem: Cacciopolli estimate}
in Section 2. So we obtain the estimates in Theorem \ref{thm: main result}
from (\ref{eq: apriori gradient estimate 1}).

The note is organized as follows. In Section 2, we prove Theorem \ref{thm: main result}.
In Section 3 we prove the gradient estimate of $p$-Laplacian equation.

Our notations are standard. $B_{R}(x)$ is the open ball in $\R^{N}$
centered at $x$ with radius $R>0$. We write
\[
\fint_{B_{R}(x)}u=\frac{1}{|B_{R}(x)|}\int_{B_{R}(x)}u,
\]
where $|B_{R}(x)|$ is the $n$-dimensional Lebesgue measure of $B_{R}(x)$.
Let $\Om$ be an arbitrary domain in $\R^{N}$. We denote by $C_{0}^{\infty}(\Om)$
the space of smooth functions with compact support in $\Om$. For
any $1\le q\le\infty$, $L^{q}(\Om)$ is the Banach space of Lebesgue
measurable functions $u$ such that the norm
\[
||u||_{q,\Om}=\begin{cases}
\left(\int_{\Om}|u|^{q}\right)^{\frac{1}{q}} & \text{if }1\le q<\infty\\
\esssup_{\Om}|f| & \text{if }q=\infty
\end{cases}
\]
is finite. The local space $L_{\text{loc}}^{q}(\Om)$ consists of
functions belonging to $L^{q}(\Om^{\prime})$ for all $\Om^{\prime}\subset\subset\Om$.
A function $u$ belongs to the Sobolev space $W^{1,q}(\Om)$ if $u\in L^{q}(\Om)$
and its first order weak partial derivatives also belong to $L^{q}(\Om)$.
We endow $W^{1,q}(\Om)$ with the norm
\[
||u||_{1,q,\Om}=||u||_{q,\Om}+||\na u||_{q,\Om}.
\]
The local space $W_{\text{loc}}^{1,q}(\Om)$ consists of functions
belonging to $W^{1,q}(\Om^{\prime})$ for all open $\Om^{\prime}\subset\subset\Om$.
We recall that $W_{0}^{1,q}(\Om)$ is the completion of $C_{0}^{\infty}(\Om)$
in the norm $||\cdot||_{1,q,\Om}$. For the properties of the Sobolev
functions, we refer to the monograph \cite{Ziemer}.

\section{Proof of main result}

This section is devoted to the proof of Theorem \ref{thm: main result}.
We need the following results. The first result is the gradient estimate
for the $p$-Laplacian equation.
\begin{prop}
\label{lem:  apriori  gradient estimate} Let $\Om$ be a domain in
$\R^{N}$ and $f\in L_{\text{loc}}^{\infty}(\Om)$. Let $u\in W_{\text{loc }}^{1,p}(\Om)$
be a weak solution to equation
\begin{equation}
-\De_{p}u=f\label{eq: p-Laplacian equation}
\end{equation}
in $\Om$, that is,
\begin{eqnarray*}
\int_{\Om}|\na u|^{p-2}\na u\cdot\na\var=\int_{\Om}f\var, &  & \forall\,\var\in C_{0}^{\infty}(\Om).
\end{eqnarray*}
Then for any ball $B_{2R}(x_{0})\subset\Om$, there holds
\begin{equation}
\sup_{B_{R/2}(x_{0})}|\na u|\le C\left(\fint_{B_{R}(x_{0})}|\na u|^{p}\right)^{\frac{1}{p}}+CR^{\frac{1}{p-1}}||f||_{\infty,B_{R}(x_{0})}^{\frac{1}{p-1}},\label{eq: gradient estimate}
\end{equation}
where $C>0$ depends only on $N$ and $p$.
\end{prop}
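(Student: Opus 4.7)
The plan is to prove \eqref{eq: gradient estimate} in three main steps: a scaling reduction to the unit ball, a uniform Caccioppoli-type inequality for the gradient of a regularized solution, and a Moser iteration yielding the pointwise bound.

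\textbf{Scaling reduction.} First I would reduce to the case $x_{0}=0$ and $R=1$ via the rescaling $v(y)=u(x_{0}+Ry)/R$, which satisfies $-\De_{p}v=\tilde{f}$ in $B_{2}(0)$ with $\tilde{f}(y)=Rf(x_{0}+Ry)$. Since
\[
\Vert\tilde{f}\Vert_{\infty,B_{1}}^{1/(p-1)}=R^{1/(p-1)}\Vert f\Vert_{\infty,B_{R}(x_{0})}^{1/(p-1)},
\]
this is exactly the $R$-dependence appearing in \eqref{eq: gradient estimate}, so it suffices to establish the unit-ball version of the inequality.

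\textbf{Regularization.} Because the $p$-Laplacian is degenerate where $\na u=0$, on $B_{1}$ I would approximate $u$ by solutions $u_{\ep}\in C^{1,\al}(B_{1})\cap W_{\text{loc}}^{2,2}(B_{1})$ of the nondegenerate equation
\[
-\operatorname{div}\bigl((\ep+|\na u_{\ep}|^{2})^{(p-2)/2}\na u_{\ep}\bigr)=f\quad\text{in }B_{1},\qquad u_{\ep}=u\ \text{on }\pa B_{1}.
\]
Classical theory (Ladyzhenskaya--Uraltseva, DiBenedetto, Tolksdorf) gives existence and strong convergence $u_{\ep}\to u$ in $W^{1,p}(B_{1})$, and the $W^{2,2}$ regularity of $u_{\ep}$ legitimizes differentiating the equation in $x_{k}$.

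\textbf{Caccioppoli inequality and Moser iteration.} Set $\kappa=\Vert f\Vert_{\infty,B_{1}}^{1/(p-1)}$ and $w_{\ep}=(\ep+\kappa^{2}+|\na u_{\ep}|^{2})^{1/2}$. Differentiating the regularized equation with respect to $x_{k}$, testing with $\eta^{2}w_{\ep}^{q-2}\pa_{k}u_{\ep}$, summing in $k$, and integrating by parts on the $f$-term, Young's inequality allows the contribution of $f$ to be absorbed into the $\kappa^{2}$ shift in $w_{\ep}$. The upshot is a Caccioppoli-type estimate
\[
\int_{B_{1}}\eta^{2}w_{\ep}^{q-2}|\na|\na u_{\ep}||^{2}\le C(q,p,N)\int_{B_{1}}|\na\eta|^{2}w_{\ep}^{q}
\]
for every $q\ge p$, with constants independent of $\ep$. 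Combined with the Sobolev inequality, this self-improves into a reverse H\"older inequality, and a standard Moser iteration on a shrinking sequence of balls between $B_{1}$ and $B_{1/2}$ yields
\[
\sup_{B_{1/2}}w_{\ep}\le C\Bigl(\fint_{B_{1}}w_{\ep}^{p}\Bigr)^{1/p}.
\]
Letting $\ep\to0^{+}$ and recalling the definitions of $w_{\ep}$ and $\kappa$ then gives \eqref{eq: gradient estimate} in the unit-ball case.

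\textbf{Main obstacle.} The principal technical difficulty is to keep all constants independent of $\ep$ throughout the iteration, particularly because the ellipticity ratio of the linearized operator behaves differently in the sub-quadratic case $1<p<2$ and the super-quadratic case $p\ge2$. The combined shift by $\ep+\kappa^{2}$ inside $w_{\ep}$ is chosen precisely so that it dominates both the regularization parameter and the inhomogeneous term $f$ uniformly, allowing a single iteration argument to cover all $p\in(1,N)$.
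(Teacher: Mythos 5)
Your overall strategy (regularize, differentiate the equation, derive a Caccioppoli inequality for a quantity comparable to $|\na u_{\ep}|$, then Sobolev plus Moser iteration) is the same as the paper's, and your scaling reduction to $R=1$ is a harmless cosmetic difference. The genuine problem is the step where you claim that, for \emph{every} $q\ge p$ and every $p\in(1,N)$, the contribution of $f$ can be absorbed into the shift $\kappa^{2}=\Vert f\Vert_{\infty}^{2/(p-1)}$ inside $w_{\ep}=(\ep+\kappa^{2}+|\na u_{\ep}|^{2})^{1/2}$, yielding the homogeneous Caccioppoli inequality with constants independent of $\ep$. Testing the differentiated equation with $\eta^{2}w_{\ep}^{q-2}\pa_{k}u_{\ep}$ produces on the right a term of the type $\int|f|\,\eta^{2}w_{\ep}^{q-2}|\na^{2}u_{\ep}|$, and the only coercive term available to absorb the Hessian carries the weight $(\ep+|\na u_{\ep}|^{2})^{\frac{p-2}{2}}$ of the linearized operator, \emph{not} $w_{\ep}^{p-2}$. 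Young's inequality therefore leaves you with $\int\eta^{2}|f|^{2}w_{\ep}^{q-2}(\ep+|\na u_{\ep}|^{2})^{\frac{2-p}{2}}$. For $1<p\le2$ this is fine, since then $(\ep+|\na u_{\ep}|^{2})^{\frac{2-p}{2}}\le w_{\ep}^{2-p}$ and $|f|\le\kappa^{p-1}\le w_{\ep}^{p-1}$ close the estimate; but for $p>2$ the factor $(\ep+|\na u_{\ep}|^{2})^{\frac{2-p}{2}}$ blows up precisely on the set where $|\na u_{\ep}|\ll\kappa$, i.e.\ exactly where the shift makes $w_{\ep}$ large while the operator's ellipticity is small, and it cannot be dominated by any power of $w_{\ep}$ uniformly in $\ep$. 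So the claimed inequality for all $q\ge p$ does not follow from the sketched argument, and your ``single iteration covering all $p\in(1,N)$'' rests on this unjustified absorption. (For $p\ge4$ there is a second, related issue: passing from the Hessian term with weight $(\ep+|\na u_{\ep}|^{2})^{\frac{p-2}{2}}$ to the term $\int\eta^{2}w_{\ep}^{q-2}|\na w_{\ep}|^{2}$ needed for the Sobolev step again requires comparing the shifted and unshifted weights, which fails pointwise.)

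The paper avoids this by never shifting: it works with $w=\ep+|\na u_{\ep}|^{2}$, keeps the $f$-term explicitly in the Caccioppoli and reverse H\"older inequalities (Lemmas \ref{lem: Caccioppoli inequality} and \ref{lem: Reverse Holder inequality}, valid only for $\al\ge\max(p-2,0)$), absorbs $f$ only at the level of the iteration quantities through $F(r)=(r\Vert f\Vert_{\infty,B_{r}})^{1/(p-1)}$ via Young's inequality (Corollary \ref{cor: Reverse inequality -2}), and then treats $p>2$ separately: the constraint $\al\ge p-2$ forces the iteration to start at exponent $2p-2$, so a further interpolation--iteration step (\ref{eq: iteration for p greater than 2-2}) is needed to lower the average from $L^{2p-2}$ to $L^{p}$. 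To repair your argument you would either have to restrict the admissible exponents and add this second iteration (essentially reproducing the paper's Case 2), or give a genuinely different derivation of the shifted Caccioppoli inequality for $p>2$; as written, the proposal is complete only in the range $1<p\le2$.
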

Proposition \ref{lem:  apriori  gradient estimate} is well known.
In the case $f\equiv0$, Proposition \ref{lem:  apriori  gradient estimate}
has been proved by DiBenedetto \cite[Proposition 3]{DiBenedetto1983}.
We will follow the argument of DiBenedetto \cite{DiBenedetto1983}
to prove Proposition \ref{lem:  apriori  gradient estimate} in the
next section.

The second result is a consequence of Theorem \ref{thm: Cao-Xiang-Yan}.
\begin{lem}
\label{lem: Cacciopolli estimate} Let $Q\in L^{\infty}(\mathbb{R}^{N})$
and $u\in\mathcal{D}^{1,p}(\mathbb{R}^{N})$ be a weak solution to
equation (\ref{eq: Objects}). Let $R_{0},R_{1}$ be the constants
as in Theorem \ref{thm: Cao-Xiang-Yan}. Then there exists a positive
constant $C$ depending on $N,p,\mu,||Q||_{\infty}$ and the solution
$u$ such that
\begin{eqnarray}
\fint_{B_{|x|/4}(x)}|\na u|^{p}\le C|x|^{-p(\gamma_{1}+1)} &  & \text{for }\;0<|x|<R_{0}/2,\label{eq: origin growth of gradient 1}
\end{eqnarray}
 and that
\begin{eqnarray}
\fint_{B_{|x|/4}(x)}|\na u|^{p}\le C|x|^{-p(\gamma_{2}+1)} &  & for\;|x|>2R_{1}.\label{eq: infinity decay of gradient 1}
\end{eqnarray}
\end{lem}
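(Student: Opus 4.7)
The plan is to apply the standard Caccioppoli inequality for the $p$-Laplacian equation and then feed in the pointwise decay supplied by Theorem~\ref{thm: Cao-Xiang-Yan}. Setting
$$f(x) = \mu|x|^{-p}|u|^{p-2}u + Q(x)|u|^{p^{*}-2}u,$$
$u$ is a weak solution of $-\De_p u = f$ on $\R^N \setminus \{0\}$. For $x_0 \ne 0$ I would pick a cutoff $\eta \in C_0^\infty(B_{|x_0|/2}(x_0))$ with $\eta \equiv 1$ on $B_{|x_0|/4}(x_0)$, $0 \le \eta \le 1$, and $|\na \eta| \le C/|x_0|$, test the weak formulation against $\var = \eta^p u$, and use Young's inequality to absorb the resulting gradient cross term. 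The outcome is the Caccioppoli bound
$$\int_{B_{|x_0|/4}(x_0)} |\na u|^p \le \frac{C}{|x_0|^p}\int_{B_{|x_0|/2}(x_0)}|u|^p + C\int_{B_{|x_0|/2}(x_0)} |f|\,|u|.$$

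To establish \eqref{eq: origin growth of gradient 1}, fix $0<|x_0|<R_0/2$ and note that every $y\in B_{|x_0|/2}(x_0)$ satisfies $|x_0|/2 \le |y| \le 3|x_0|/2 < R_0$, so Theorem~\ref{thm: Cao-Xiang-Yan} delivers the uniform bounds $|u(y)| \le C|x_0|^{-\gamma_1}$ and $|y|^{-p}\le C|x_0|^{-p}$. Substituting these into the definition of $f$ and then into the Caccioppoli bound, and dividing by $|B_{|x_0|/4}(x_0)|\sim|x_0|^N$, a short computation yields
$$\fint_{B_{|x_0|/4}(x_0)} |\na u|^p \le C|x_0|^{-p(\gamma_1+1)} + C|x_0|^{-p^{*}\gamma_1}.$$
A brief algebraic manipulation shows that $p^{*}\gamma_1 \le p(\gamma_1+1)$ is equivalent to $\gamma_1 \le (N-p)/p$, which holds because $\gamma_1 < (N-p)/p$; combined with $|x_0|<1$ this gives $|x_0|^{-p^{*}\gamma_1} \le |x_0|^{-p(\gamma_1+1)}$, so the second term is absorbed into the first.

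For \eqref{eq: infinity decay of gradient 1}, the argument is identical except that when $|x_0|>2R_1$ each $y \in B_{|x_0|/2}(x_0)$ satisfies $|y|\ge|x_0|/2 > R_1$, so $|u(y)|\le C|x_0|^{-\gamma_2}$. The same bookkeeping produces
$$\fint_{B_{|x_0|/4}(x_0)} |\na u|^p \le C|x_0|^{-p(\gamma_2+1)} + C|x_0|^{-p^{*}\gamma_2},$$
and now the strict inequality $\gamma_2 > (N-p)/p$ gives $p^{*}\gamma_2 > p(\gamma_2+1)$, which together with $|x_0|>1$ again makes the second term smaller than the first.

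The only non-mechanical point in the whole argument is the numerology $\gamma_1 < (N-p)/p < \gamma_2$ used in the two exponent comparisons; this is exactly the condition that prevents the critical Sobolev term $Q|u|^{p^{*}-2}u$ from producing growth worse than the Hardy term $\mu|x|^{-p}|u|^{p-2}u$. Apart from this, the proof is a routine Caccioppoli computation combined with a pointwise application of Theorem~\ref{thm: Cao-Xiang-Yan}, so no substantial obstacle is anticipated.
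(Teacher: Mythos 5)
Your proof is correct and follows essentially the same route as the paper: test the weak formulation with $\varphi=\eta^p u$ to get a Caccioppoli bound, then feed in the pointwise decay from Theorem~\ref{thm: Cao-Xiang-Yan}. The only minor difference is cosmetic (you package the lower-order terms as $\int|f||u|$, while the paper leaves the Hardy and critical terms separate); you also spell out the exponent comparison $p^{*}\gamma_{1}\le p(\gamma_{1}+1)\Leftrightarrow\gamma_{1}\le(N-p)/p$ which the paper invokes silently.
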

\begin{proof}
Fix $x\in\R^{N}$ such that $0<|x|<R_{0}/2$. Let $B=B_{|x|/4}(x)$
and $2B=B_{|x|/2}(x)$. Let $\eta\in C_{0}^{\infty}(2B)$ be a cut-off
function such that $0\le\eta\le1$ in $2B$ and $\eta\equiv1$ on
$B$, $|\na\eta|\le8/|x|$. Substituting test function $\var=\eta^{p}u$
into equation (\ref{eq: Objects}), we obtain that
\[
\int_{2B}|\na u|^{p-2}\na u\cdot\na\var=\int_{2B}\left(\frac{\mu}{|y|^{p}}\eta^{p}|u|^{p}+Q(y)\eta^{p}|u|^{p^{*}}\right).
\]
We have that
\[
\int_{2B}|\na u|^{p-2}\na u\cdot\na\var\ge\frac{1}{2}\int_{B}|\na u|^{p}-C_{p}\int_{2B}|u|^{p}|\na\eta|^{p}
\]
for some constant $C_{p}>0$ depending only on $p$. Thus
\[
\int_{B}|\na u|^{p}\le C_{p}\int_{2B}\left(|u|^{p}|\na\eta|^{p}+\frac{\mu}{|y|^{p}}\eta^{p}|u|^{p}+Q(y)\eta^{p}|u|^{p^{*}}\right).
\]
Applying (\ref{eq: origin growth}) of Theorem \ref{thm: Cao-Xiang-Yan},
we obtain that
\begin{eqnarray*}
\int_{B}|\na u|^{p}\le C|x|^{-p-p\ga_{1}+N}, &  & \forall\,0<|x|<R_{0}/2,
\end{eqnarray*}
where $C>0$ depends on $N,p,\mu,||Q||_{\infty}$ and the solution
$u$. This proves (\ref{eq: origin growth of gradient 1}). We can
prove (\ref{eq: infinity decay of gradient 1}) similarly. We finish
the proof of Lemma \ref{lem: Cacciopolli estimate}.
\end{proof}
Now we prove Theorem \ref{thm: main result}.

\begin{proof}[Proof of Theorem \ref{thm: main result}] Let $u\in{\cal D}^{1,p}(\R^{N})$
be a solution to equation (\ref{eq: Objects}) with $Q\in L^{\infty}(\R^{N})$.
We only prove (\ref{eq: origin growth 1}). We can prove (\ref{eq: infinity decay-1})
similarly. Let $R_{0}\in(0,1)$ be the constant as in Theorem \ref{thm: Cao-Xiang-Yan}.
Set
\begin{eqnarray*}
f(x)\equiv\frac{\mu}{|x|^{p}}|u|^{p-2}u+Q(x)|u|^{p^{*}-2}u, &  & x\in B_{R_{0}}(0)\backslash\{0\}.
\end{eqnarray*}
By (\ref{eq: origin growth}) of Theorem \ref{thm: Cao-Xiang-Yan}
and the fact that $\ga_{1}<(N-p)/p$, we obtain that
\begin{eqnarray}
|f(x)|\le C|x|^{-p-(p-1)\ga_{1}} &  & \forall\,0<|x|<R_{0}.\label{eq: estimate on f}
\end{eqnarray}
Thus $f\in L_{\text{loc}}^{\infty}(B_{R_{0}}(0)\backslash\{0\})$.

Since $u$ is a weak solution to equation (\ref{eq: Objects}), $u$
is a weak solution to equation (\ref{eq: p-Laplacian equation}) in
$B_{R_{0}}(0)\backslash\{0\}$ with $f$ given above. For any $x\in B_{R_{0}}(0)\backslash\{0\}$,
we apply Proposition \ref{lem:  apriori  gradient estimate} on the
ball $B_{|x|/2}(x)$ to obtain that
\begin{equation}
\sup_{B_{|x|/8}(x)}|\na u|\le C\left(\fint_{B_{|x|/4}(x)}|\na u|^{p}\right)^{\frac{1}{p}}+C|x|^{\frac{1}{p-1}}||f||_{\infty,B_{|x|/4}(x)}^{\frac{1}{p-1}}.\label{eq: gradient estimates 2}
\end{equation}
Combining (\ref{eq: origin growth of gradient 1}), (\ref{eq: estimate on f})
and (\ref{eq: gradient estimates 2}) gives that
\begin{eqnarray*}
\sup_{B_{|x|/8}(x)}|\na u|\le C|x|^{-1-\ga_{1}} &  & \forall\,0<|x|<R_{0}/2,
\end{eqnarray*}
for some constant $C>0$ depending on $N,p,\mu,||Q||_{\infty}$ and
the solution $u$. This proves (\ref{eq: origin growth 1}). \end{proof}

\section{Gradient estimates for $p$-Laplacian equations}

This section is devoted to the proof of Proposition \ref{lem:  apriori  gradient estimate}.

Let $B_{2R}(x_{0})\subset\Om$ be an arbitrary ball. In the following
we write $B_{r}=B_{r}(x_{0})$ for all $r>0$. Let $\ep>0$. Following
\cite{DiBenedetto1983}, we consider the equation
\begin{eqnarray}
\begin{cases}
-\divergence\left((\ep+|\na u_{\ep}|^{2})^{\frac{p-2}{2}}\na u_{\ep}\right)=f & \text{in }B_{2R},\\
u_{\ep}=u & \text{on }\pa B_{2R}.
\end{cases}\label{eq: approximation equation}
\end{eqnarray}
Then (\ref{eq: approximation equation}) admits a unique solution
$u_{\ep}\in W^{1,p}(B_{2R})$ such that
\[
u_{\ep}\in C^{2}(B_{2R}),
\]
and up to a subsequence
\[
u_{\ep}\to u\text{ and }\na u_{\ep}\to\na u\text{ uniformly in }B_{R}
\]
as $\ep\to0$.

To prove Proposition \ref{lem:  apriori  gradient estimate}, we will
prove the following estimate for $u_{\ep}$:
\begin{equation}
\sup_{B_{R/2}}|\na u_{\ep}|\le C\left(\fint_{B_{R}}\left(\ep+|\na u_{\ep}|^{2}\right)^{\frac{p}{2}}\right)^{\frac{1}{p}}+CR^{\frac{1}{p-1}}||f||_{\infty,B_{R}}^{\frac{1}{p-1}},\label{eq: gradient estimate for u_epsilon}
\end{equation}
for a constant $C>0$ depending only on $N$ and $p$ and independent
of $\ep$ and $R$. Then by taking $\ep\to0$ in (\ref{eq: gradient estimate for u_epsilon}),
we obtain (\ref{eq: gradient estimate}) and then Proposition \ref{lem:  apriori  gradient estimate}
is proved.

We divide the proof of (\ref{eq: gradient estimate for u_epsilon})
into several lemmas. For simplicity, we write $v=u_{\ep}$ and $w=\ep+|\na v|^{2}$.
We shall always assume that $N\ge3$. We can prove (\ref{eq: gradient estimate for u_epsilon})
similarly when $N=2$. First we derive the following Caccioppoli type
inequality.
\begin{lem}
\label{lem: Caccioppoli inequality}For any $\al\ge\max(p-2,0)$ and
any $\eta\in C_{0}^{\infty}(B_{R})$, we have
\begin{equation}
\int_{B_{R}}w^{\frac{\al+p-4}{2}}|\na w|^{2}\eta^{2}\le C\int_{B_{R}}w^{\frac{\al+p}{2}}|\na\eta|^{2}+C\int_{B_{R}}|f|^{2}w^{\frac{\al+2-p}{2}}\eta^{2},\label{eq: Cacciopolli inequality}
\end{equation}
for some $C=C(N,p)>0$. \end{lem}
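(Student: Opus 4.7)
The strategy is a Bernstein-type differentiation of the regularized equation, tested against a power of $w$ times a cutoff, followed by careful absorption via Young's inequality. Since $v = u_\epsilon \in C^2(B_{2R})$ satisfies $-\divergence A(\nabla v) = f$ with $A(\xi) = (\epsilon+|\xi|^2)^{(p-2)/2}\xi$, I would first derive, for each $k=1,\dots,N$, the weak identity
\[
\int_{B_{2R}} A_{ij}(\nabla v)\,\partial_i v_k\,\partial_j\psi = \int_{B_{2R}} f\,\partial_k\psi, \qquad \forall\psi\in C_0^\infty(B_{2R}),
\]
where $v_k=\partial_{x_k}v$ and $A_{ij}(\xi)=w^{(p-2)/2}\delta_{ij}+(p-2)w^{(p-4)/2}\xi_i\xi_j$ with $w=\epsilon+|\xi|^2$. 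This identity follows by multiplying the pointwise equation by $\partial_k\psi$ and integrating by parts in $x_k$, so no regularity of $f$ beyond $L^\infty$ is needed.

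Next I would take the test function $\psi=v_k w^{\alpha/2}\eta^2$, sum over $k$, and use the identity $\sum_k v_k\partial_i v_k = \tfrac12\partial_i w$ to split the left-hand side into
\[
I_1=\int\Bigl(\sum_k A_{ij}\,\partial_i v_k\,\partial_j v_k\Bigr)w^{\alpha/2}\eta^2,\qquad I_2=\tfrac{\alpha}{4}\int A_{ij}\,\partial_i w\,\partial_j w\,w^{\alpha/2-1}\eta^2,\qquad I_3=\int A_{ij}\,\partial_i w\,\partial_j\eta\,w^{\alpha/2}\eta.
\]
The uniform ellipticity $\min(1,p-1)w^{(p-2)/2}|\zeta|^2\leq A_{ij}(\nabla v)\zeta_i\zeta_j\leq\max(1,p-1)w^{(p-2)/2}|\zeta|^2$ (valid for both $p\geq 2$ and $1<p<2$) together with the pointwise bound $|\nabla w|^2\leq 4w|D^2v|^2$, which follows from $\partial_i w=2\sum_k v_k\partial_i v_k$ and Cauchy--Schwarz, would yield
\[
I_1\geq c\int w^{(\alpha+p-2)/2}|D^2v|^2\eta^2\geq \tfrac{c}{4}\int w^{(\alpha+p-4)/2}|\nabla w|^2\eta^2,\qquad I_2\geq \tfrac{c\alpha}{4}\int w^{(\alpha+p-4)/2}|\nabla w|^2\eta^2.
\]
The cross term $I_3$ I would dispatch by Young's inequality: it is bounded by $\delta\int w^{(\alpha+p-4)/2}|\nabla w|^2\eta^2+C_\delta\int w^{(\alpha+p)/2}|\nabla\eta|^2$, the first piece being absorbed into $I_1+I_2$.

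For the right-hand side, I would expand $\sum_k\partial_k(v_k w^{\alpha/2}\eta^2)$ to get
\[
\int f\Delta v\,w^{\alpha/2}\eta^2+\tfrac{\alpha}{2}\int f w^{\alpha/2-1}(\nabla v\cdot\nabla w)\eta^2+2\int f w^{\alpha/2}\eta(\nabla v\cdot\nabla\eta),
\]
and then estimate each term using $|\Delta v|\leq\sqrt{N}|D^2v|$ and $|\nabla v|\leq w^{1/2}$. Young's inequality, applied with the right exponents, distributes the quadratic residues between $I_1$ (for the $|D^2v|^2$-piece coming from the $\Delta v$-term) and $I_1+I_2$ (for the $|\nabla w|^2$-pieces); the cutoff piece collects with the residue of $I_3$; and every leftover $|f|^2$-contribution appears with the prescribed weight $w^{(\alpha+2-p)/2}$. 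Rearranging then yields \eqref{eq: Cacciopolli inequality}.

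The principal difficulty is bookkeeping: the coercivity of $I_1$ must be split between absorbing the $\Delta v$-contribution at the level of $\int w^{(\alpha+p-2)/2}|D^2v|^2\eta^2$ and producing the target $\int w^{(\alpha+p-4)/2}|\nabla w|^2\eta^2$, and the Young-splittings for each RHS term must be tuned so that every residual exponent lands at $(\alpha+2-p)/2$. The assumption $\alpha\geq\max(p-2,0)$ enters exactly to ensure admissibility of the test function and non-negativity of the exponent $(\alpha+2-p)/2$, so that the resulting estimate survives in the limit $\epsilon\to 0$.
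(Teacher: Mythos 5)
Your proposal is correct and follows essentially the same route as the paper: differentiate the regularized equation, test with $\partial_k v\, w^{\alpha/2}\eta^2$ summed over $k$, use the ellipticity of $\mathcal{A}^{ij}$ together with $|\nabla w|^2\le 4w|\nabla^2 v|^2$ for coercivity, and absorb the remaining terms by Young's inequality so that every $f$-residue carries the weight $w^{(\alpha+2-p)/2}$. Only a minor remark: within this lemma the hypothesis $\alpha\ge\max(p-2,0)$ serves mainly to make the coefficient of the $\partial_i w\,\partial_k v$ term (your $I_2$) nonnegative and to keep the exponent $(\alpha+2-p)/2$ nonnegative for the later Young step in Corollary \ref{cor: Reverse inequality -2}, rather than for admissibility of the test function or the $\epsilon\to 0$ limit.
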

\begin{proof}
For simplicity, we write $\pa_{i}=\pa_{x_{i}},\pa_{ij}=\pa_{x_{i}x_{j}}^{2}$
$(i,j=1,\cdots,N)$. Differentiating equation (\ref{eq: approximation equation})
with respect to $x_{k}$ $(k=1,\cdots,N)$ gives
\begin{eqnarray*}
-\pa_{i}({\cal A}^{ij}(\na v)\pa_{jk}v)=\pa_{k}f &  & \text{in }B_{2R},
\end{eqnarray*}
where
\begin{eqnarray*}
{\cal A}^{ij}(\na v)=\Big(\ep+|\na v|^{2}\Big)^{\frac{p-2}{2}}\de_{ij}+(p-2)(\ep+|\na v|^{2})^{\frac{p-4}{2}}\pa_{i}v\pa_{j}v, &  & i,j=1,\cdots,N,
\end{eqnarray*}
$\de_{ij}=1$ if $i=j$ and $\de_{ij}=0$ if $i\ne j$. The above
equation is understood in the sense that, for all $\var\in C_{0}^{\infty}(B_{2R})$,
(the summation notation is used throughout)
\begin{equation}
\int_{B_{2R}}{\cal A}^{ij}(\na v)\pa_{jk}v\pa_{i}\var=-\int_{B_{2R}}f\pa_{k}\var.\label{eq: equation of gradient u epsilon}
\end{equation}
 It is easy to prove that (\ref{eq: equation of gradient u epsilon})
holds also for all $\var\in W_{0}^{1,q}(B_{2R})$ for any $q\ge1$.
Set
\[
\var=w^{\frac{\al}{2}}\pa_{k}v\eta^{2},
\]
where $\eta\in C_{0}^{\infty}(B_{R})$ and $\al\ge\max(p-2,0)$. Then
\begin{eqnarray*}
\pa_{i}\var=w^{\frac{\al}{2}}\pa_{ik}v\eta^{2}+\frac{\al}{2}w^{\frac{\al}{2}-1}\pa_{i}w\pa_{k}v\eta^{2}+2w^{\frac{\al}{2}}\pa_{k}v\eta\pa_{i}\eta, &  & i=1,\cdots,N.
\end{eqnarray*}
Substituting $\var$ into equation (\ref{eq: equation of gradient u epsilon}),
and summing up all $k=1,\cdots,N$, we obtain that
\begin{equation}
\begin{aligned}\sum_{k=1}^{N}\int_{B_{R}}{\cal A}^{ij}(\na v)\pa_{jk}v\pa_{i}\var & \ge C_{1}\int_{B_{R}}w^{\frac{\al+p-2}{2}}|\na^{2}v|^{2}\eta^{2}+C_{1}(\al+p)\int_{B_{R}}w^{\frac{\al+p-4}{2}}|\na w|^{2}\eta^{2},\\
 & \quad-\, C_{2}\int_{B_{R}}w^{\frac{\al+p}{2}}|\na\eta|^{2},
\end{aligned}
\label{eq: big term}
\end{equation}
where $C_{1},C_{2}>0$ depend only on $p$, $|\na^{2}v|=\left(\sum_{i,j=1}^{N}(\pa_{ij}v)^{2}\right)^{1/2}$
, and that
\[
\sum_{k=1}^{N}\left|\int_{B_{R}}f\pa_{k}\var\right|\le\int_{B_{R}}|f|\left(w^{\frac{\al}{2}}|\na^{2}v|\eta^{2}+\al w^{\frac{\al-1}{2}}|\na w|\eta^{2}+2w^{\frac{\al+1}{2}}\eta|\na\eta|\right).
\]
Applying Young's inequality
\begin{eqnarray}
a^{\th}b^{1-\th}\le\th\de a+\frac{1-\theta}{\de^{\frac{\theta}{1-\th}}}b, &  & \forall\,\de,a,b>0,\forall\th\in[0,1)\label{eq: Young's inequality}
\end{eqnarray}
we obtain that
\begin{equation}
\begin{aligned}\sum_{k=1}^{N}\left|\int_{B_{R}}f\pa_{k}\var\right| & \le\frac{C_{1}}{2}\int_{B_{R}}w^{\frac{\al+p-2}{2}}|\na^{2}v|^{2}\eta^{2}+\frac{C_{1}(\al+p)}{2}\int_{B_{R}}w^{\frac{\al+p-4}{2}}|\na w|^{2}\eta^{2}\\
 & \quad+C\int_{B_{R}}w^{\frac{\al+p}{2}}|\na\eta|^{2}+C(\al+p)\int_{B_{R}}|f|^{2}w^{\frac{\al+2-p}{2}}\eta^{2}.
\end{aligned}
\label{eq: small term}
\end{equation}
Combining (\ref{eq: big term}) and (\ref{eq: small term}), we obtain
(\ref{eq: Cacciopolli inequality}) for some $C>0$ depends only on
$N$ and $p$. This finishes the proof of Lemma \ref{lem: Caccioppoli inequality}.
\end{proof}
By the Sobolev inequality we obtain the following reverse inequality.
\begin{lem}
\label{lem: Reverse Holder inequality} For any $\al\ge\max(p-2,0)$
and $\eta\in C_{0}^{\infty}(B_{R})$, we have
\begin{equation}
\left(\int_{B_{R}}\left(\eta^{2}w^{\frac{\al+p}{2}}\right)^{\chi}\right)^{1/\chi}\le C_{N,p}(\al+p)^{2}\left(\int_{B_{R}}w^{\frac{\al+p}{2}}|\na\eta|^{2}+\int_{B_{R}}|f|^{2}w^{\frac{\al+2-p}{2}}\eta^{2}\right),\label{eq: reverse Holder inequality-1}
\end{equation}
where $\chi=N/(N-2)$.\end{lem}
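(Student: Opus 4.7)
The plan is to apply the standard Sobolev inequality to the test function $g=\eta\, w^{(\alpha+p)/4}\in W_{0}^{1,2}(B_{R})$ (which is legitimate since $v\in C^{2}(B_{2R})$ and $\eta\in C_{0}^{\infty}(B_{R})$) and then convert the resulting gradient integral into something controlled by Lemma \ref{lem: Caccioppoli inequality}.

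First I would note that, because $N\ge 3$, Sobolev's inequality gives
\[
\left(\int_{B_{R}}|g|^{2^{*}}\right)^{2/2^{*}}\le C_{N}\int_{B_{R}}|\nabla g|^{2},
\]
where $2^{*}=2N/(N-2)$, so that $2^{*}/2=\chi$. With $g=\eta\, w^{(\alpha+p)/4}$ the left-hand side becomes exactly $\left(\int_{B_{R}}(\eta^{2}w^{(\alpha+p)/2})^{\chi}\right)^{1/\chi}$, which is the quantity we wish to bound. Next, applying the product/chain rule and the elementary inequality $|a+b|^{2}\le 2|a|^{2}+2|b|^{2}$ gives
\[
|\nabla g|^{2}\le 2\,|\nabla\eta|^{2}\,w^{(\alpha+p)/2}\;+\;\tfrac{(\alpha+p)^{2}}{8}\,\eta^{2}\,w^{(\alpha+p-4)/2}|\nabla w|^{2}.
\]

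The second term on the right is precisely of the shape controlled by Lemma \ref{lem: Caccioppoli inequality}, which yields
\[
\int_{B_{R}}w^{(\alpha+p-4)/2}|\nabla w|^{2}\eta^{2}\le C\int_{B_{R}}w^{(\alpha+p)/2}|\nabla\eta|^{2}+C\int_{B_{R}}|f|^{2}w^{(\alpha+2-p)/2}\eta^{2}.
\]
Substituting this back and combining with the first term gives a bound of the form
\[
\left(\int_{B_{R}}\left(\eta^{2}w^{(\alpha+p)/2}\right)^{\chi}\right)^{1/\chi}\le C_{N,p}\bigl(1+(\alpha+p)^{2}\bigr)\left(\int_{B_{R}}w^{(\alpha+p)/2}|\nabla\eta|^{2}+\int_{B_{R}}|f|^{2}w^{(\alpha+2-p)/2}\eta^{2}\right),
\]
and since $\alpha+p\ge p>1$ the factor $1+(\alpha+p)^{2}$ can be absorbed into $C_{N,p}(\alpha+p)^{2}$, giving (\ref{eq: reverse Holder inequality-1}).

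I do not expect a serious obstacle here. The only delicate point is the bookkeeping of the exponent of $w$ in the chain rule (checking that $2\cdot((\alpha+p)/4-1)=(\alpha+p-4)/2$ matches the left side of the Caccioppoli bound in Lemma \ref{lem: Caccioppoli inequality}) and tracking the $(\alpha+p)^{2}$ factor arising from the differentiation of $w^{(\alpha+p)/4}$; the rest is a direct combination of Sobolev and the Caccioppoli estimate already at our disposal.
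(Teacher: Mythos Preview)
Your proof is correct and follows essentially the same route as the paper: set $g=\eta\,w^{(\alpha+p)/4}$, apply the Sobolev inequality to $g$, expand $|\nabla g|^{2}$ by the product rule, and control the $w^{(\alpha+p-4)/2}|\nabla w|^{2}\eta^{2}$ term via Lemma~\ref{lem: Caccioppoli inequality}. The only cosmetic difference is that the paper writes the cruder bound $(\alpha+p)^{2}$ in place of your $(\alpha+p)^{2}/8$ and absorbs the ``$+1$'' directly into the constant without comment.
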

\begin{proof}
Let $h=\eta w^{\frac{\al+p}{4}}$. Then
\[
|\na h|^{2}\le2|\na\eta|^{2}w^{\frac{\al+p}{2}}+(\al+p)^{2}w^{\frac{\al+p-4}{2}}|\na w|^{2}\eta^{2}.
\]
By (\ref{eq: Cacciopolli inequality}) of Lemma \ref{lem: Caccioppoli inequality},
\begin{equation}
\int_{B_{R}}|\na h|^{2}\le C(\al+p)^{2}\int_{B_{R}}\left(w^{\frac{\al+p}{2}}|\na\eta|^{2}+|f|^{2}w^{\frac{\al+2-p}{2}}\eta^{2}\right),\label{eq: 3.2.1}
\end{equation}
where $C=C(N,p)>0$. Now we use Sobolev inequality to obtain
\begin{equation}
\left(\int_{B_{R}}h^{2\chi}\right)^{\frac{1}{\chi}}\le C_{N}\int_{B_{R}}|\na h|^{2},\label{eq: Sobolev}
\end{equation}
where $\chi=N/(N-2)$. Combining (\ref{eq: 3.2.1}) and (\ref{eq: Sobolev})
yields (\ref{eq: reverse Holder inequality-1}). We finish the proof
of Lemma \ref{lem: Reverse Holder inequality}.
\end{proof}
In the following, we write
\begin{eqnarray}
\bar{w}=w^{1/2} & \text{ and } & F(r)=\left(r||f||_{\infty,B_{r}}\right)^{1/(p-1)}.\label{eq: definition of bar(w)}
\end{eqnarray}
As a consequence of Lemma \ref{lem: Reverse Holder inequality}, we
have
\begin{cor}
\label{cor: Reverse inequality -2} Let $0<r\le R$ and $\al\ge\max(p-2,0)$.
Then for any $0<r_{1}<r_{2}\le r$, we have
\begin{equation}
\left(\int_{B_{r_{1}}}\left(\bar{w}^{(\al+p)\chi}+F(r)^{(\al+p)\chi}\right)\right)^{\frac{1}{\chi}}\le\frac{C_{N,p}(\al+p)^{2}}{(r_{2}-r_{1})^{2}}\int_{B_{r_{2}}}\left(\bar{w}^{\al+p}+F(r)^{\al+p}\right),\label{eq: iteration formular 1}
\end{equation}
where $\chi=N/(N-2)$. \end{cor}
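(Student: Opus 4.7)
The plan is to deduce Corollary \ref{cor: Reverse inequality -2} from Lemma \ref{lem: Reverse Holder inequality} by inserting a concrete cutoff and trading the $|f|$-integral for one in $F(r)$. First I would fix a standard bump $\eta\in C_{0}^{\infty}(B_{r_{2}})$ with $\eta\equiv 1$ on $B_{r_{1}}$ and $|\na\eta|\le 2/(r_{2}-r_{1})$; since $r_{2}\le r\le R$, this $\eta$ is admissible in Lemma \ref{lem: Reverse Holder inequality}. The gradient term on its right-hand side is then immediately controlled by $\frac{4}{(r_{2}-r_{1})^{2}}\int_{B_{r_{2}}}\bar w^{\al+p}$, while the left-hand side dominates $\int_{B_{r_{1}}}\bar w^{(\al+p)\chi}$.

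The next step cleans up the $|f|^{2}\bar w^{\al+2-p}$ term. The definition \eqref{eq: definition of bar(w)} gives $\|f\|_{\infty,B_{r}}=F(r)^{p-1}/r$, so $|f|^{2}\le F(r)^{2(p-1)}/r^{2}$ pointwise on $B_{r_{2}}\subset B_{r}$, and $1/r^{2}\le 1/(r_{2}-r_{1})^{2}$ because $r_{2}-r_{1}\le r$. The Young inequality \eqref{eq: Young's inequality} with exponent $\theta=(\al+2-p)/(\al+p)\in[0,1)$ (valid because $\al\ge\max(p-2,0)$ and $p>1$) yields the pointwise bound
\[
F(r)^{2(p-1)}\bar w^{\al+2-p}=\bigl(\bar w^{\al+p}\bigr)^{\theta}\bigl(F(r)^{\al+p}\bigr)^{1-\theta}\le \bar w^{\al+p}+F(r)^{\al+p},
\]
which converts the $|f|$-integral into $\frac{1}{(r_{2}-r_{1})^{2}}\int_{B_{r_{2}}}\bigl(\bar w^{\al+p}+F(r)^{\al+p}\bigr)$. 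Plugging everything back into Lemma \ref{lem: Reverse Holder inequality} produces the desired estimate with $\bar w^{(\al+p)\chi}$ alone on the left.

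It remains to introduce the constant term $F(r)^{(\al+p)\chi}$ on the left-hand side. I would compute $\int_{B_{r_{1}}}F(r)^{(\al+p)\chi}=F(r)^{(\al+p)\chi}|B_{r_{1}}|$ and use $|B_{r_{1}}|^{1/\chi}=c_{N}r_{1}^{N-2}\le c_{N}r_{2}^{N-2}\le C_{N}(r_{2}-r_{1})^{-2}|B_{r_{2}}|$ (the last step from $r_{2}^{2}\ge(r_{2}-r_{1})^{2}$), which supplies the matching bound $\frac{C_{N}}{(r_{2}-r_{1})^{2}}\int_{B_{r_{2}}}F(r)^{\al+p}$. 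Adding this to the previous display and applying the subadditivity $(a+b)^{1/\chi}\le a^{1/\chi}+b^{1/\chi}$ (valid since $1/\chi<1$) on the left produces \eqref{eq: iteration formular 1}. The argument is computational with no serious obstacle; the only genuine choice is the Young exponent $\theta$, dictated by the requirement that $F(r)^{2(p-1)}\bar w^{\al+2-p}$ match the homogeneity of $\bar w^{\al+p}+F(r)^{\al+p}$, and the mild bookkeeping point is to keep the prefactor $(r_{2}-r_{1})^{-2}$ consistent when introducing the constant $F(r)$-contributions on both sides.
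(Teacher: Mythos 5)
Your proposal is correct and follows essentially the same route as the paper: the same cutoff in Lemma \ref{lem: Reverse Holder inequality}, the same replacement of $\|f\|_{\infty,B_r}^2$ by $F(r)^{2(p-1)}/(r_2-r_1)^2$ using $r_2-r_1\le r$, the same Young inequality with $\theta=(\al+2-p)/(\al+p)$, and the same volume comparison $|B_{r_1}|^{1/\chi}\le C_N|B_{r_2}|/(r_2-r_1)^2$ to insert the $F(r)^{(\al+p)\chi}$ term on the left. The only cosmetic difference is that you invoke subadditivity of $t\mapsto t^{1/\chi}$ directly where the paper uses the constant $2^{1-1/\chi}$; both are fine.
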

\begin{proof}
Let $\eta\in C_{0}^{\infty}(B_{r_{2}})$ be a cut-off function such
that $0\le\eta\le1$ in $B_{r_{2}}$, $\eta\equiv1$ on $B_{r_{1}}$
and $|\na\eta|\le2/(r_{2}-r_{1})$. Substituting $\eta$ into (\ref{eq: reverse Holder inequality-1})
we obtain that
\[
\left(\int_{B_{r_{1}}}w^{\frac{(\al+p)\chi}{2}}\right)^{1/\chi}\le C_{N,p}(\al+p)^{2}\left(\frac{1}{(r_{2}-r_{1})^{2}}\int_{B_{r_{2}}}w^{\frac{\al+p}{2}}+||f||_{\infty,B_{r}}^{2}\int_{B_{r_{2}}}w^{\frac{\al+2-p}{2}}\right).
\]
Thus we have that
\[
\left(\int_{B_{r_{1}}}w^{\frac{(\al+p)\chi}{2}}\right)^{1/\chi}\le\frac{C_{N,p}(\al+p)^{2}}{(r_{2}-r_{1})^{2}}\left(\int_{B_{r_{2}}}w^{\frac{\al+p}{2}}+\int_{B_{r_{2}}}(r||f||_{\infty,B_{r}})^{2}w^{\frac{\al+2-p}{2}}\right).
\]
Since $\al\ge\max(p-2,0)$ and $p>1$, $\al+p>\al+2-p\ge0$. Young's
inequality (\ref{eq: Young's inequality}) gives
\[
(r||f||_{\infty,B_{r}})^{2}w^{\frac{\al+2-p}{2}}\le w^{\frac{\al+p}{2}}+\left(r||f||_{\infty,B_{r}}\right)^{\frac{\al+p}{p-1}}.
\]
Recall that $\bar{w}$ and $F(r)$ are defined by (\ref{eq: definition of bar(w)}).
Thus we have
\begin{eqnarray*}
\left(\int_{B_{r_{1}}}w^{\frac{(\al+p)\chi}{2}}\right)^{1/\chi} & \le & \frac{C_{N,p}(\al+p)^{2}}{(r_{2}-r_{1})^{2}}\left(\int_{B_{r_{2}}}w^{\frac{\al+p}{2}}+\int_{B_{r_{2}}}\left(r||f||_{\infty,B_{r}}\right)^{\frac{\al+p}{p-1}}\right)\\
 & = & \frac{C_{N,p}(\al+p)^{2}}{(r_{2}-r_{1})^{2}}\int_{B_{r_{2}}}\left(\bar{w}^{\al+p}+F(r)^{\al+p}\right).
\end{eqnarray*}
Since $r_{1}<r_{2}\le r$, we have $|B_{r_{1}}|^{\frac{1}{\chi}}\le C_{N}|B_{r_{2}}|/(r_{2}-r_{1})^{2}$.
Therefore
\[
\begin{aligned}\left(\int_{B_{r_{1}}}\left(\bar{w}^{(\al+p)\chi}+F(r)^{(\al+p)\chi}\right)\right)^{\frac{1}{\chi}} & \le2^{1-\frac{1}{\chi}}\left(\int_{B_{r_{1}}}\left(\bar{w}^{(\al+p)\chi}\right)\right)^{\frac{1}{\chi}}+2^{1-\frac{1}{\chi}}F(r)^{\al+p}|B_{r_{1}}|^{\frac{1}{\chi}}\\
 & \le\frac{C_{N,p}(\al+p)^{2}}{(r_{2}-r_{1})^{2}}\int_{B_{r_{2}}}\left(\bar{w}^{\al+p}+F(r)^{\al+p}\right)+\frac{C_{N}F(r)^{\al+p}|B_{r_{2}}|}{(r_{2}-r_{1})^{2}}.
\end{aligned}
\]
Now it is easy to obtain that
\[
\left(\int_{B_{r_{1}}}\left(\bar{w}^{(\al+p)\chi}+F(r)^{(\al+p)\chi}\right)\right)^{\frac{1}{\chi}}\le\frac{C_{N,p}(\al+p)^{2}}{(r_{2}-r_{1})^{2}}\left(\int_{B_{r_{2}}}\left(\bar{w}^{\al+p}+F(r)^{\al+p}\right)\right),
\]
which gives (\ref{eq: iteration formular 1}). We finish the proof
of Corollary \ref{cor: Reverse inequality -2}.
\end{proof}
Now we prove Proposition \ref{lem:  apriori  gradient estimate}.

\begin{proof}[Proof of Proposition \ref{lem:  apriori  gradient estimate}]
We prove estimate (\ref{eq: gradient estimate for u_epsilon}). Let
$\si\in(0,1)$ and $0<r\le R$. Let $r_{i}=\si r+\frac{(1-\si)r}{2^{i}},$
$i=0,1,\cdots$.

\emph{Case 1$:\ensuremath{1<p\le2}$. }In this case, define
\begin{eqnarray*}
 & \al_{i}=p\chi^{i}-p, & i=0,1,\cdots.
\end{eqnarray*}
 Applying (\ref{eq: iteration formular 1}) with $r_{1}=r_{i+1}$,
$r_{2}=r_{i}$ and $\al=\al_{i}$, we obtain that
\begin{eqnarray}
M_{i+1}\le\frac{C^{\frac{1}{p\chi^{i}}}(4\chi^{2})^{\frac{i}{p\chi^{i}}}}{\left((1-\si)r\right)^{\frac{2}{p\chi^{i}}}}M_{i}, &  & i=0,1,\cdots,\label{eq: iteration}
\end{eqnarray}
where
\[
M_{i}=\left(\int_{B_{r_{i}}}\left(\bar{w}^{p\chi^{i}}+F(r)^{p\chi^{i}}\right)\right)^{\frac{1}{p\chi^{i}}},
\]
and $\bar{w},F(r)$ are defined by (\ref{eq: definition of bar(w)}).
An iteration of (\ref{eq: iteration}) gives us
\[
M_{i+1}\le\frac{C_{N,p}}{(1-\si)^{N/p}r^{N/p}}\left(\int_{B_{r}}\left(\bar{w}^{p}+F(r)^{p}\right)\right)^{\frac{1}{p}}.
\]
Finally, letting $i\to\infty$, we obtain that
\[
\sup_{B_{\si r}}\left(\bar{w}+F(r)\right)\le\frac{C_{N,p}}{(1-\si)^{N/p}}\left(\fint_{B_{r}}\left(\bar{w}^{p}+F(r)^{p}\right)\right)^{\frac{1}{p}}\le\frac{C_{N,p}}{(1-\si)^{N/p}}\left(\left(\fint_{B_{r}}\bar{w}^{p}\right)^{\frac{1}{p}}+F(r)\right).
\]
In particular, choosing $\si=1/2$ and $r=R$, we obtain (\ref{eq: gradient estimate for u_epsilon})
for $1<p\le2$.

\emph{Case 2: $p>2$. }In this case, define
\begin{eqnarray*}
 & \al_{i}=(2p-2)\chi^{i}-p, & i=0,1,\cdots.
\end{eqnarray*}
Applying the same argument as above, we obtain that
\[
\sup_{B_{\si r}}\left(\bar{w}+F(r)\right)\le\frac{C_{N,p}}{(1-\si)^{N/(2p-2)}}\left(\fint_{B_{r}}\left(\bar{w}+F(r)\right)^{2p-2}\right)^{\frac{1}{2p-2}}.
\]
Since $\si<1$ and $F$ is nondecreasing, we obtain that
\begin{equation}
\sup_{B_{\si r}}\left(\bar{w}+F(\si r)\right)\le\frac{C_{N,p}}{(1-\si)^{N/(2p-2)}}\left(\fint_{B_{r}}\left(\bar{w}+F(r)\right)^{2p-2}\right)^{\frac{1}{2p-2}}.\label{eq:iteration formula 2}
\end{equation}
Let $\si_{i}=1-\frac{1-\si}{2^{i}},$ $i=0,1,\cdots.$ Applying (\ref{eq:iteration formula 2})
with $r=\si_{i+1}R$, $\si=\si_{i}/\si_{i+1}$, we get that
\begin{equation}
M_{i}\le\frac{C}{\left(1-\frac{{\displaystyle \si_{i}}}{{\displaystyle \si_{i+1}}}\right)^{N\beta/p}}\left(\fint_{B_{R}}\left(\bar{w}+F(R)\right)^{p}\right)^{\frac{\beta}{p}}M_{i+1}^{1-\be},\label{eq: iteration for p greater than 2-2}
\end{equation}
where $\be=p/(2p-2)$, and
\[
M_{i}=\sup_{B_{\si_{i}R}}\left(\bar{w}+F(\si_{i}R)\right).
\]
An iteration of (\ref{eq: iteration for p greater than 2-2}) gives
that
\[
\sup_{B_{\si R}}\left(\bar{w}+F(\si R)\right)=M_{0}\le\frac{C_{N,p}}{(1-\si)^{N/p}}\left(\fint_{B_{R}}\left(\bar{w}+F(R)\right)^{p}\right)^{\frac{1}{p}}.
\]
Choosing $\si=1/2$, and applying Minkowski's inequality, we obtain
(\ref{eq: gradient estimate for u_epsilon}) for $p>2$. Thus we complete
the proof of (\ref{eq: gradient estimate for u_epsilon}).

Now taking $\ep\to0$ in (\ref{eq: gradient estimate for u_epsilon}),
we obtain (\ref{eq: gradient estimate}). Proposition \ref{lem:  apriori  gradient estimate}
is proved. \end{proof}

\emph{Acknowledgement. }The author is financially supported by the
Academy of Finland, project 259224. He would like to thank Prof. Xiao
Zhong for his guidance in the preparation of this note.

\end{document}